\providecommand{\U}[1]{\protect\rule{.1in}{.1in}}
\newtheorem{theorem}{Theorem}[section]
\newtheorem{conjecture}[theorem]{Conjecture}
\newtheorem{question}[theorem]{Question}
\theoremstyle{definition}
\newtheorem{definition}[theorem]{Definition}
\newtheorem{remark}[theorem]{Remark}
\newcommand{\Z}{\mathbb{Z}}
\title{Open questions in descriptive set theory and dynamical systems}
\author{Jérôme Buzzi, Nishant Chandgotia, Matthew Foreman,\\
Su Gao, Felipe García-Ramos, Anton Gorodetski,\\ François Le Maitre, Federico Rodríguez-Hertz, and Marcin Sabok}
\date{}
\begin{document}

\maketitle
\begin{abstract}
  This file is composed of questions that emerged or were of interest during the workshop "Interactions between Descriptive Set Theory and Smooth Dynamics" that took place in Banff, Canada on 2022. 
\end{abstract}
\tableofcontents

\section{Classification}
\subsection{Topological conjugacy}
\subsubsection{Smooth dynamics}
\textit{-This subsection was written by Anton Gorodetski}

In his ICM talk in Stockholm (in 1962), Stephen Smale proposed a program of studying the topological conjugacy
classes of diffeomorphisms of a given manifold, \cite{S63ICM}. The program was motivated by the related problem on qualitative behaviour of solutions of differential equations on a given manifold.

Due to recent developments \cite{FG}, one should expect that the equivalence relation defined by topological conjugacy on the space of smooth diffeomorphisms $\text{Diff}^{\infty}(M)$ of a given closed (compact, without boundary) manifold $M$ is not Borel, and hence no reasonable classification is possible. Nevertheless, the construction that is used in \cite{FG} essentially uses $C^\infty$-surgery, and cannot be directly extended to, say, to the case of real analytic diffeomorphisms. Hence the first question:

\vspace{5pt}

\begin{question} [Gorodetski] Is there a way to provide a classification of real analytic diffeomorphisms $\text{Diff}^{\omega}(M)$ of a given closed manifold $M$, $\text{dim}\,M\ge 2$, by complete invariants? Is the equivalence relation on $\text{Diff}^{\omega}(M)$ given by topological conjugacy Borel?
\end{question}
\vspace{5pt}

Before we formulate the next question, let us remind that a diffeomorphism of a closed  manifold $M$ is Kupka-Smale if every periodic orbit is hyperbolic, and stable and unstable manifolds of any two periodic orbits are transversal. It is known that the set of Kupka-Smale diffeomorphisms of a given manifold $M$ forms a dense $G_{\delta}$ subspace of $\text{Diff}^r(M)$ for each $r=1, 2, \ldots, \infty$, see \cite{Kup} and \cite{S1963}.

\vspace{5pt}

\begin{question} [Gorodetski] Can the space of Kupka-Smale diffeomorphisms of a given closed manifold be classified up to topological conjugacy by complete invariants? Is topological conjugacy a Borel equivalence relation in the space of Kupka-Smale diffeomorphisms?
\end{question}
\vspace{5pt}

The space of Kupka-Smale diffeomorphisms is a very specific dense $G_\delta$ subset in $\text{Diff}^r(M)$. It is reasonable to ask a more general question:

\vspace{5pt}

\begin{question} [Gorodetski] Is there an open dense subset (or a dense $G_{\delta}$ subset)  in $\text{Diff}^\infty(M)$, $\text{dim}\,M\ge 2$, such that the equivalence relation given by topological conjugacy on the subset is Borel?
\end{question}
%\subsubsection{Hyperbolic systems}
 
 \textit{-The next two questions were provided by Matt Foreman and are related to the previous question.} 
 
  \begin{question} [Foreman]
      What is the complexity of the relation of topological conjugacy of Axiom A systems?
  \end{question}

 \begin{question} [Foreman] Let $\mathcal C$ be the collection of diffeomorphisms of a compact manifold $M$ that are topologically conjugate to a structurally stable diffeomorphism.  Let $E$ be the relation of topological conjugacy restricted to $\mathcal C$.  Is $E$ Borel reducible to $\approx$ (equality relation)? 
  \end{question}

\subsubsection{Cantor minimal systems}
\textit{This subsection was written by Su Gao.} 
\begin{question} [Gao] Is the topological conjugacy relation for all Cantor minimal systems Borel?
\end{question}

Kaya \cite{Kaya} proved that the topological conjugacy relation for all \textit{pointed} Cantor minimal systems is a Borel equivalence relation. In fact he showed that it is Borel bireducible to $=^+$. But the answer to the above general problem seems to be unknown.

\begin{question} [Gao] Is the topological flip conjugacy relation for all Cantor minimal systems Borel?
\end{question}

As far as I can tell the answer to this problem is unknown. The seminal work of Giordano, Putnam and Skau \cite{GPS1} \cite{GPS2} started a long string of works toward studying the topological orbit equivalence and the flip conjugacy relations using algebraic methods. Among the most striking results are those of Bezuglyi--Medynets \cite{BM}, Juschenko--Monod \cite{JM}, and Matui \cite{Matui}, which says that the flip conjugacy relation is classified by the topological full groups of these systems, or alternatively, by the commutator subgroups of the topological full groups, which turn out to be infinite simple amenable groups. However, it is not known if the class of countably infinite simple amenable groups is Borel complete.

\begin{question} [Gao] Is the topological conjugacy relation for all minimal subshifts hyperfinite or a universal countable Borel equivalence relation?
\end{question}

By Clemens' result \cite{Clemens} the topological conjugacy relation for all subshifts is a universal countable Borel equivalence relation. It is well established (e.g. \cite{GH} \cite{Thomas}) that $E_0$ is Borel reducible to the topological conjugacy relation for all minimal subshifts. Thus we have $E_0$ as an lower bound and $E_\infty$ as an upper bound for this equivalence relation in the Borel reducibility hierarchy. Its exact complexity is unknown.

\subsubsection{Shifts of finite type}
Using shift equivalence of matrices one can characterize when two SFTs are conjugate \cite{williams73}. This does not give a (finite time) algorithm. 
\begin{question}
    [Foreman]
     Is there an algorithm for determining whether two subshifts of finite type are conjugate by homeomorphisms?
\end{question}
\subsection{Borel conjugacy}
\textit{-This subsection was written by Jérôme Buzzi.} 

My understanding of many results in dynamics is that hard/impossible classification problems come from hyperbolic dynamics, more precisely from points that have a zero Lyapunov exponent or are irregular from the point of view of ergodic theory (the null sets in Birkhoff theorem and Oseledets theorem). 
Note that this irregular subset can carry lots of dynamical behaviors, which in some case can be made invisible up to Borel isomorphism. Indeed, the hyperbolic part, even nonuniform, can be reduced to Bernoulli where Ornstein theory applies. This reduction relies on coding (see in particular \cite{Sarig}) and Borel generator theory (see especially \cite{Hochman}).

In the following I list some results and natural questions following from previous works of several people. 

My goal is to get feedback about these statements:
 \begin{enumerate}
 \item Are those nice classifications restricted to suitably relaxed interesting?
 \item Is the Borelness of the relevant invariants as obvious as I believe?
 \item Is it possible to use them to state and prove {\bf relatively Borel classifications} for stronger notions of isomorphism?
\end{enumerate}

\begin{definition}
Two diffeomorphisms $f,g\in\operatorname{Diff}(M)$ are \emph{Borel conjugate}, written $f\sim_B g$ if there is a Borel automorphism $\psi:M\to M$ st $g\circ\psi=\psi\circ f$.
\end{definition}

The following is a straightforward consequence of the work with Mike Boyle \cite{BB} (up to my correctly understanding what I say):

\begin{theorem}
Let $\mathscr A$ be the set of $C^\infty$ diffeomorphisms on some closed surface $M$ which are topologically mixing and have positive topological entropy.

$(\mathscr A,\sim_B)$ is Borel
\end{theorem}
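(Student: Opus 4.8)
The plan is to exhibit a complete invariant for $\sim_B$ on $\mathscr A$ which is itself a Borel function of $f$; since $\mathscr A$ is a Borel — hence standard Borel — subset of the Polish space $\operatorname{Diff}^\infty(M)$ (topological mixing is a countable combination of the open conditions $\{f:f^n(U)\cap V\neq\emptyset\}$, and $h_{\mathrm{top}}(f)>0$ is Borel because $h_{\mathrm{top}}$ is), the relation $\sim_B$ will then be the preimage of a diagonal under a Borel map into a standard Borel space, and therefore Borel. So everything reduces to: (i) identifying the invariant, and (ii) checking its Borelness.

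For (i), I would use the structure theory underlying \cite{BB}. By Sarig's symbolic dynamics \cite{Sarig}, every $f\in\mathscr A$ carries a countable Markov shift extension which is finite-to-one over a Borel, $f$-invariant set $H_f$ that is of full measure for every $f$-invariant measure of positive entropy; a Bowen-type argument together with Borel generator theory \cite{Hochman} — the heart of \cite{BB} — makes $(H_f,f\restriction H_f)$ Borel isomorphic, off a set null for all $f$-invariant probability measures, to a countable Markov shift, and the classification of mixing countable Markov shifts up to almost Borel isomorphism of \cite{BB} reduces the almost-Borel-isomorphism type of the positive-entropy part of $f$ to the pair $\bigl(h_{\mathrm{top}}(f),\,N(f)\bigr)$, where $N(f)$ is the (finite, by Buzzi--Crovisier--Sarig) number of ergodic measures of maximal entropy of $f$. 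For the \emph{literal} $\sim_B$ one must additionally match the Borel-isomorphism type of the complement $M\setminus H_f$ — the periodic orbits together with the zero-entropy aperiodic part — which is discussed at the end.

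For (ii), granting that $\bigl(h_{\mathrm{top}}(f),N(f)\bigr)$ is complete, both coordinates are Borel on $\mathscr A$. The entropy: $h_{\mathrm{top}}$ is upper semicontinuous on $\operatorname{Diff}^\infty$ of a surface (Newhouse), hence Borel; in any case $f\mapsto h_{\mathrm{top}}(f)$ is a countable combination of the lower semicontinuous maps $f\mapsto(\text{maximal size of an }(n,\varepsilon)\text{-separated set})$. The count $N$: the map $(f,\mu)\mapsto h_\mu(f)$ is Borel on the closed set $\{(f,\mu)\in\operatorname{Diff}^\infty(M)\times\mathcal P(M):\mu\text{ is }f\text{-invariant}\}$ — metric entropy is Borel in the measure, via a fixed countable family of finite partitions — and upper semicontinuous in $\mu$ for $f\in C^\infty$ (Newhouse), so
\[
A:=\bigl\{(f,\mu):\mu\text{ is }f\text{-invariant and ergodic and }h_\mu(f)=h_{\mathrm{top}}(f)\bigr\}
\]
is Borel, with every vertical section $A_f$ finite. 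By the Luzin--Novikov theorem $A$ admits a Borel enumeration of its sections, so each $\{f:N(f)=k\}=\{f:\#A_f=k\}$ is Borel; thus $\bigl(h_{\mathrm{top}}(\cdot),N(\cdot)\bigr):\mathscr A\to\mathbb R_{>0}\times\mathbb N$ is Borel and the theorem follows.

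The step I expect to be the genuine obstacle is justifying that $\bigl(h_{\mathrm{top}},N\bigr)$ really is complete — equivalently, disposing of the junk $M\setminus H_f$. A literal Borel conjugacy must send periodic orbits to periodic orbits of equal period and match the zero-entropy aperiodic parts, and here descriptive complexity intrudes: the cardinality of $\operatorname{Fix}(f^p)$ can be a continuum (blow up a fixed point of a mixing Anosov diffeomorphism into a circle of fixed points — the result is still in $\mathscr A$), so a conjugacy invariant must separate "$\operatorname{Fix}(f^p)$ countable" from "uncountable", a distinction of perfect-set type, while zero-entropy Borel $\mathbb Z$-systems admit no comparably clean classification. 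The clean resolution is to observe that every measure carried by $M\setminus H_f$ has entropy strictly below $h_{\mathrm{top}}(f)$, so the junk vanishes upon passing to entropy conjugacy (isomorphism after deleting a set null for all measures of maximal entropy), for which $\bigl(h_{\mathrm{top}}(f),N(f)\bigr)$ is precisely the complete invariant and the argument above goes through verbatim. Obtaining the statement for the literal $\sim_B$ would instead require absorbing the junk into the mixing Markov shift — a Borel Krieger embedding \cite{Hochman} for the aperiodic zero-entropy part, the periodic orbits treated separately — carried out Borel-uniformly over all of $\mathscr A$, and that uniformity is where the real work lies.
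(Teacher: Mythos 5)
Your argument, as you yourself acknowledge in the final paragraph, does not prove the stated theorem: it establishes Borelness of a weaker relation (entropy conjugacy, i.e.\ isomorphism after discarding a set negligible for all measures of maximal entropy) and explicitly defers the literal $\sim_B$ case as ``where the real work lies.'' That deferred step is precisely the content of the paper's proof. The paper invokes the Boyle--Buzzi structure theory \cite{BB} (built on \cite{Sarig} and Hochman's Borel generator theorem \cite{Hochman}) for the statement that for $f,g\in\mathscr A$ one has $f\sim_B g$ if and only if $h_{\mathrm{top}}(f)=h_{\mathrm{top}}(g)$ and $|\operatorname{Fix}(f^n)|=|\operatorname{Fix}(g^n)|$ for all $n$. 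The mechanism you are missing is that the zero-entropy aperiodic ``junk'' does \emph{not} have to be classified at all: because the good part is a mixing, positive-entropy Markov shift (period $1$), it is Borel-universal for aperiodic Borel systems of strictly smaller entropy, so the junk is absorbed into it on both sides; what cannot be absorbed is exactly the periodic part, which is why the cardinalities $|\operatorname{Fix}(f^n)|$ enter the complete invariant. Your proposed invariant $\bigl(h_{\mathrm{top}}(f),N(f)\bigr)$ drops the periodic data entirely (and $N\equiv 1$ on $\mathscr A$ anyway, since by \cite{BCS} mixing $C^\infty$ surface diffeomorphisms with positive entropy have a \emph{unique} measure of maximal entropy), so it cannot be complete for $\sim_B$: two elements of $\mathscr A$ with equal entropy but different fixed-point counts are not Borel conjugate.

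Two smaller remarks. First, your treatment of the absorption step as an open uniformity problem conflates two different things: completeness of the invariant is a statement about each pair $(f,g)$ separately and needs no Borel-uniform choice of conjugacies, so the ``Borel-uniform Krieger embedding over all of $\mathscr A$'' you worry about is not required — only the Borelness of the invariant map is, which is the second half of the paper's (one-line) argument. Second, your observation that $|\operatorname{Fix}(f^n)|$ can be of perfect-set type is a fair caveat about that second half: the paper simply asserts ``these are all Borel invariants,'' and distinguishing countable from uncountable closed fixed-point sets in a Borel way is the one point where the paper's sketch deserves scrutiny; but flagging that does not repair your proposal, which proves a different theorem.
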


\begin{proof}
From \cite{BCS}, for all $f,g\in\mathscr A$, we have $f\sim_B g\iff h_\top(f)=h_\top(g)$ and $|Fix(f^n)|=|Fix(g^n)|$ for all $n$. The mechanism here is that the "bad" dynamics (nonhyperbolic and irregular) can be pushed into the "good" one that has positive entropy and no period. These are all Borel invariants.
\end{proof}

\begin{remark}
I believe that the same is true in the conservative world, using metric entropy instead of topological entropy (see \cite{HHTU}).
\end{remark}

In general, it does not seem possible to ignore the "bad dynamics" (eg, it can have maximal entropy or a smaller period than the "good one"). Hence the following definition.

\begin{definition}
A Borel subset $B$ is almost full if $\mu(B)=1$ for all (invariant Borel probability) measure which are hyperbolic (ie, no zero Lyapunov exponent).

Two diffeomorphisms $f,g:M\to M$ are \emph{almost Borel conjugate} if there are almost full Borel subsets $X\subset M$, $Y\subset M$ and a Borel isomorphism $\psi:X\to Y$ which is a conjugacy: $g\circ\psi=\psi\circ g$. We write $f\sim_{aB} g$.
\end{definition}

For surface diffeomorphisms, By Ruelle's inequality, if an ergodic measure has a zero exponent, then it has zero entropy (This is false in general in higher dimension).

\textbf{Almost theorem}
\textit{Let $\mathscr B$ be the set of $C^\infty$ surface diffeomorphisms.}

Then $(\mathscr B,\sim_{aB})$ is Borel.

\begin{proof}[Proof (sort of)]
By a version of Katok's horseshoe theorem, every hyperbolic measure belongs to a homoclinic class.
 
From \cite{BCS}, each homoclinic class has exactly one measure of maximal entropy with the same period. The almost Borel conjugacy class can be obtained by listing the topological entropies and periods of all the homoclinic classes.

I believe that one can do it by determining the set of horseshoes, their entropies and periods and how they nest.
\end{proof}

\begin{remark}
Using \cite{Burguet} one should be able to classify dissipative $C^\infty$ surface diffeomorphisms up to \emph{volume preserving} almost Borel conjugacy.
\end{remark}

\textbf{Almost theorem}
\textit{Let $\mathscr C$ be the set of $C^2$ diffeomorphisms of a closed manifold.
Then $(\mathscr C,\sim_{aB})$ is Borel.}

\begin{proof}[Proof (sort of)]
The same as before, except that one must detect which homoclinic classes have a measure of maximal entropy (and these classes are slightly more abstract, namely the measurable homoclinic classes of \cite{BCS}). I believe that these existence properties are still Borel using the Borel splitting of the manifold according to periods built in \cite{BB}.
\end{proof}

\begin{question} [Buzzi]
I conjecture that a similar statement also holds when discarding only sets of zero Lebesgue measure using metric entropy instead of topological entropy (see \cite{HHTU}), at least in dimension 2 where we have the recent result \cite{Burguet}.
\end{question}

\subsection{Measure isomorphism}
A $K$-automorphism is a measure-preserving dynamical system where every non-trivial factor has positive entropy. Every Bernoulli process is a $K$-automoprhims but the opposite is not true. 
 \begin{question} [Newberger]
  Is measure isomorphism restricted to $K$-automorphisms a Borel equivalence relation? Can it be reduced to an $S_\infty$-action?
 \end{question}\label{OP7}

\begin{question} Is the measure isomorphism relation on ergodic measure-preserving transformations of the unit interval bi-reducible with the maximal Polish group action?
\end{question}

\subsection{Kakutani equivalence}

It is known that the Kakutani equivalence relation is a complete analytic set, and hence not Borel \cite{Gerber}. 

\begin{question} Where does the Kakutani equivalence relation on ergodic measure-preserving transformations sit among the analytic equivalence relations? Is it reducible to an $S_\infty$-action? 
\end{question}
\section{Structure}

\subsection{Completely positive entropy}
\textit{This subsection was written by Felipe García-Ramos.}

Given a manifold $X$ we denote with $C^k(X,X)$ the set of all $C^k$ functions from X into X endowed with the uniform topology. 

We say $f\in C^k(X,X)$ has \textbf{completely positive entropy} if every non-trivial factor has positive topological entropy. We denote the set of such functions with CPE$^k(X)$. 

The space CPE$^0(X)$ contains every system with specification and any topological model or support of a K-system. 
\begin{theorem}
[Darji and García-Ramos \cite{Darji}] For every $n$, $\mathrm{CPE}^0(\Pi^n)$ is complete coanalytic (hence not Borel). 
\end{theorem}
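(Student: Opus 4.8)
The plan is to establish two things: that $\mathrm{CPE}^0(\Pi^n)$ is coanalytic, and that it is coanalytic-hard. For the upper bound, write $X=\Pi^n$ and recall that, by definition, $f\in\mathrm{CPE}^0(X)$ means every non-trivial factor of $(X,f)$ has positive topological entropy, while a factor of $(X,f)$ is --- up to isomorphism over $X$ --- the same data as a closed equivalence relation $R\subseteq X\times X$ with $(f\times f)(R)\subseteq R$: the induced map $g_R$ on the compact metrisable quotient $X/R$ (with quotient map $\pi_R\colon X\to X/R$) is then automatically continuous, and the factor is non-trivial exactly when $R\neq X\times X$. The hyperspace $K(X\times X)$ of closed subsets, with the Vietoris topology, is Polish; the condition ``$R$ is a proper closed $(f\times f)$-invariant equivalence relation'' is Borel in $(f,R)$; and $(f,R)\mapsto h_{\mathrm{top}}(X/R,g_R)$ is a Borel function, since one may compute this entropy through a fixed countable family of finite open covers of $X/R$ canonically associated with $R$ and a fixed countable basis of $X$, over which the relevant cover numbers are upper semicontinuous in $(f,R)$ and the usual $\lim_{\varepsilon}\limsup_n$ is Borel. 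Hence $C^0(\Pi^n,\Pi^n)\setminus\mathrm{CPE}^0(\Pi^n)=\{f:\exists R\ (R\text{ a proper closed }(f\times f)\text{-invariant equivalence relation with }h_{\mathrm{top}}(X/R,g_R)=0)\}$ is the projection of a Borel set, hence analytic, so $\mathrm{CPE}^0(\Pi^n)$ is coanalytic.

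For hardness I would construct a continuous (hence Borel) map $T\mapsto f_T$ from the Polish space of non-empty trees on $\mathbb N$ into $C^0(\Pi^n,\Pi^n)$ with the property that $f_T\in\mathrm{CPE}^0(\Pi^n)$ if and only if $T$ is well-founded; since the well-founded trees form a $\boldsymbol{\Pi}^1_1$-complete set, this together with the upper bound gives the theorem. There is a pitfall to avoid: a naive design that puts a fixed positive-entropy ``gadget'' on a clopen (or, more generally, a topologically inessential closed invariant) piece of the space together with a ``branch-detector'' that, along an infinite branch of $T$, creates a zero-entropy subsystem, can never land in $\mathrm{CPE}^0(\Pi^n)$ at all --- for one may always collapse that positive-entropy piece to a point by a closed invariant equivalence relation, obtaining a non-trivial zero-entropy factor regardless of $T$. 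So the positive-entropy dynamics has to be topologically essential (transitive, in fact mixing), hence non-collapsible, and the zero-entropy factor must be produced internally by an infinite branch. The intended shape is therefore: $f_T$ is conjugate to a fixed mixing self-map of $\Pi^n$ with the specification property (such maps exist on $\Pi^n$ and lie in $\mathrm{CPE}^0(\Pi^n)$, since specification implies uniform positive entropy, which implies completely positive entropy) whenever $T$ is well-founded, and is a $T$-dependent deformation of it in which every infinite branch forces an extra rigid coordinate --- a minimal, Sturmian- or odometer-like, hence zero-entropy, quotient --- whenever $T$ is ill-founded.

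For the ill-founded direction one then reads off, from a chosen infinite branch $\beta$, a proper closed $(f_T\times f_T)$-invariant equivalence relation whose quotient retains only the rigid coordinate, hence is non-trivial with zero entropy, witnessing $f_T\notin\mathrm{CPE}^0(\Pi^n)$. The well-founded direction is the main obstacle: one must prove that \emph{no} non-trivial factor of $(\Pi^n,f_T)$ has zero entropy --- i.e.\ that the mixing/specification character of the base survives the deformation --- and that this holds uniformly over all well-founded $T$, while simultaneously keeping $T\mapsto f_T$ continuous and valued in the prescribed manifold $\Pi^n$ for every $n$. Getting all these constraints to hold at once is where the real work lies.
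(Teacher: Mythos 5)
The survey you are reading does not actually prove this theorem; it is quoted from Darji and García-Ramos, so there is no in-paper argument to compare with. Your membership half is reasonable: parametrizing factors by closed $(f\times f)$-invariant equivalence relations in the compact hyperspace $K(\Pi^n\times\Pi^n)$ and checking that the quotient entropy is Borel in $(f,R)$ is a plausible route to $\mathrm{CPE}^0(\Pi^n)\in\boldsymbol{\Pi}^1_1$ (the details on Borelness of $(f,R)\mapsto h_{\mathrm{top}}(X/R,g_R)$ would still need to be written out, but nothing there looks fatal).

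The genuine gap is the hardness half, which is the entire content of the theorem, and what you offer there is a wish-list rather than a construction. You never say how $f_T$ is built from $T$, and you explicitly defer the only nontrivial verification --- that for well-founded $T$ no non-trivial factor of $f_T$ has zero entropy --- to ``the real work.'' The shape you describe (``conjugate to a fixed mixing specification map when $T$ is well-founded, a deformation with a rigid zero-entropy quotient when $T$ is ill-founded'') cannot be arranged by treating the two cases separately, since well-foundedness is itself $\boldsymbol{\Pi}^1_1$-complete and $T\mapsto f_T$ must be a single uniform Borel (in your plan, continuous) construction; the dichotomy has to fall out of the construction itself, and proving CPE in the well-founded case is precisely where the known proof uses a genuinely different idea that your plan has no analogue of: the transfinite iteration of the entropy-pair closure (a CPE rank), systems realizing arbitrarily large countable ranks built by recursion on the tree, and a $\boldsymbol{\Pi}^1_1$-boundedness argument to conclude completeness. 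You also give no mechanism for realizing the construction as continuous self-maps of $\Pi^n$ for every $n$ while keeping the map $T\mapsto f_T$ continuous --- the survey itself remarks that the argument extends to Lipschitz maps on other compact orientable manifolds but is open for smooth maps, which signals that this realization step is delicate, not routine. As it stands, your proposal establishes only the easy coanalyticity bound.
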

The prove also holds for Lipschitz continuous functions on other compact orientable manifolds but we do not know what happens for smooth functions. 
\begin{question}[García-Ramos]
Let $k\geq 1$ and $n\geq 1$. What is the complexity of $\mathrm{CPE}^k(\Pi^n)$? 
\end{question}

Systems with completely positive entropy may not be mixing. Actually once you add mixing the picture changes. 
%\begin{theorem}
%[Buzzi, Corvisier and Sarig] 
%CPE$^{\infty}(\Pi^2)\cap${Mix} is Borel. 
%\end{theorem}

The following result is a corollary of a result of Blokh and the fact that mixing maps (denoted by $\mathrm{Mix}$) form a Borel family (see \cite{Darji}).  
\begin{theorem}
$\mathrm{CPE}(\Pi)\cap\mathrm{Mix}$ is Borel (where $\Pi$ is the unit circle).
\end{theorem}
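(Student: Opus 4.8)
The plan is to identify $\mathrm{CPE}(\Pi)\cap\mathrm{Mix}$ with a set that is transparently Borel, namely $\{f\in\mathrm{Mix}:h_{\mathrm{top}}(f)>0\}$, and then to verify Borelness of the latter. One inclusion is free of dynamical content: if $f\in\mathrm{CPE}(\Pi)$ then the identity factor $\Pi\to\Pi$ is a non-trivial factor (the circle is not a point), so by the very definition of completely positive entropy $h_{\mathrm{top}}(f)>0$; hence $\mathrm{CPE}(\Pi)\cap\mathrm{Mix}\subseteq\{f\in\mathrm{Mix}:h_{\mathrm{top}}(f)>0\}$.

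For the reverse inclusion I would invoke Blokh's ``spectral'' decomposition theory for continuous maps of the interval and the circle: a topologically mixing one-dimensional map with positive topological entropy satisfies the specification property (possibly after the standard reductions to the relevant basic sets and to a suitable iterate that absorbs the degree of a circle map; note that completely positive entropy of $f^k$ implies it for $f$, since a non-trivial factor $(Y,g)$ of $(\Pi,f)$ yields a non-trivial factor $(Y,g^k)$ of $(\Pi,f^k)$ and $h_{\mathrm{top}}(g)=h_{\mathrm{top}}(g^k)/k$). The specification property passes to topological factors, and any system with specification having more than one point has positive topological entropy: one concatenates shadowing blocks around two separated points to manufacture exponentially many separated orbits. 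Hence every non-trivial factor of $f$ has positive entropy, i.e. $f\in\mathrm{CPE}(\Pi)$ (equivalently, $f$ has Blanchard's uniform positive entropy). Combining the two inclusions, $\mathrm{CPE}(\Pi)\cap\mathrm{Mix}=\{f\in\mathrm{Mix}:h_{\mathrm{top}}(f)>0\}$.

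It then remains to check that this set is Borel. Since $\mathrm{Mix}$ is Borel by \cite{Darji}, it suffices that $f\mapsto h_{\mathrm{top}}(f)$ be a Borel function on $C^0(\Pi,\Pi)$: for fixed $n\in\mathbb{N}$ and $\varepsilon=1/m$ the maximal cardinality of an $(n,\varepsilon)$-separated set is lower semicontinuous in $f$ (it is witnessed by finitely many strict inequalities among finitely many iterates evaluated at finitely many points), hence Borel, and $h_{\mathrm{top}}(f)=\sup_m\limsup_n\frac1n\log(\cdot)$ is a countable combination of such functions, hence Borel. Intersecting the Borel set $\{h_{\mathrm{top}}>0\}$ with $\mathrm{Mix}$ finishes the argument. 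I expect the only genuine obstacle to be pinning down the precise form of Blokh's theorem used in the second step --- in particular handling circle maps (degree, the finitely many basic sets) rather than just interval maps --- so that ``mixing $+$ positive entropy'' really does upgrade to specification (or directly to uniform positive entropy); the forward inclusion is trivial and the Borelness is a routine semicontinuity computation.
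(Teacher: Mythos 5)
Your argument is correct and rests on the same two ingredients as the paper's one-line proof: Blokh's theorem for mixing one-dimensional (in particular circle) maps, which yields specification and hence completely positive entropy, together with the Borelness of $\mathrm{Mix}$ from \cite{Darji}. The only difference is a redundant detour: since Blokh's theorem applies to every topologically mixing circle map with no entropy hypothesis, one gets $\mathrm{CPE}(\Pi)\cap\mathrm{Mix}=\mathrm{Mix}$ outright (mixing circle maps automatically have positive entropy), so your identification with $\{f\in\mathrm{Mix}:h_{\mathrm{top}}(f)>0\}$ and the semicontinuity computation showing $h_{\mathrm{top}}$ is Borel, while correct, are not needed.
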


We can contrast this results with the following. 
 
 \begin{theorem}
[Darji and García-Ramos \cite{Darji}] Let $X$ be a Cantor space. We have that $\mathrm{CPE}^k(X)\cap\mathrm{Mix}$ is not Borel. 
\end{theorem}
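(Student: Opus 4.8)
The plan is to establish that $\mathrm{CPE}^0(X)\cap\mathrm{Mix}$ is $\Pi^1_1$-hard (in fact $\Pi^1_1$-complete): if it were Borel, pulling it back along a Borel reduction would make a $\Pi^1_1$-complete set Borel, which is impossible. The coanalytic upper bound is routine and I would dispatch it first. Since $\mathrm{Mix}$ is Borel (see \cite{Darji}), it suffices to see that $\mathrm{CPE}^0(X)$ is $\Pi^1_1$, i.e.\ that its complement is $\Sigma^1_1$; and indeed $f\notin\mathrm{CPE}^0(X)$ says exactly that there exists a closed $f\times f$-invariant equivalence relation $R\subseteq X\times X$, distinct from the diagonal and from $X\times X$, whose quotient system $(X/R,f_R)$ has zero topological entropy. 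In the parameter $(f,R)\in C(X,X)\times K(X\times X)$ the clauses ``$R$ is a closed $f\times f$-invariant equivalence relation'' and the two exclusions are Borel, and the topological entropy of $(X/R,f_R)$ is a Borel function of $(f,R)$ --- it is built from the (lower semicontinuous) separated-set counts by countable suprema and limits. So the complement of $\mathrm{CPE}^0(X)$ is $\Sigma^1_1$, as wanted.

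For hardness I would build a Borel map $T\mapsto f_T$ from the Polish space of trees on $\mathbb N$ into $C(X,X)$, with $X$ a fixed Cantor space, arranging that (i) $f_T$ is topologically mixing for every $T$, and (ii) $f_T$ has a nontrivial zero-entropy factor iff $T$ is ill-founded; then $f_T\in\mathrm{CPE}^0(X)\cap\mathrm{Mix}$ iff $T$ is well-founded, and the set of well-founded trees is $\Pi^1_1$-complete. The reason (i) and (ii) can coexist over a Cantor space but not over the circle is that a factor of a topologically mixing system is again topologically mixing, so the obstruction furnished in case (ii) is itself a topologically mixing zero-entropy Cantor system --- and such systems exist, whereas on the circle Blokh's result forces topologically mixing maps into $\mathrm{CPE}$. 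Concretely, I would start from a fixed mixing, positive-entropy carrier such as the full $2$-shift, and graft onto it --- along a sparse, rigid, Toeplitz-like sequence of scales --- a family of nested towers indexed by the nodes of $T$, organized so that the maximal zero-entropy (topological Pinsker) factor of $f_T$ is trivial precisely when $T$ has no infinite branch, while the sparsity of the grafting scales keeps the ambient system topologically mixing. An alternative route would be to bootstrap from the already-proved $\Pi^1_1$-completeness of $\mathrm{CPE}^0$ in \cite{Darji} by means of a ``mixing-ification'' of Cantor systems that is topologically mixing yet has the same topological Pinsker factor as its input.

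The main obstacle is precisely this tension: topological mixing tends to impose specification-type behavior, which entails $\mathrm{CPE}$, so one must thread the needle of producing systems that are genuinely mixing and yet still carry a tree-controlled zero-entropy factor, and --- crucially --- do this \emph{uniformly}, since $T\mapsto f_T$ has to be Borel and one therefore cannot split the construction according to whether $T$ is well-founded (that condition is not Borel). Designing a single uniform gadget whose topological Pinsker factor becomes nontrivial exactly along the infinite branches of $T$, without ever destroying mixing, is the heart of the matter; checking Borel-measurability of $T\mapsto f_T$ and of the auxiliary entropy estimates is then bookkeeping.
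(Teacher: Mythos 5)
First, a point of order: the paper you are working from does not actually prove this theorem; it is quoted from \cite{Darji}, so there is no in-paper argument to match. Judged on its own merits, your proposal has the right shape (a $\Pi^1_1$ upper bound plus $\Pi^1_1$-hardness via a Borel reduction from well-founded trees), but it has a genuine gap exactly where all the difficulty lies: the map $T\mapsto f_T$ is never constructed. Properties (i) and (ii) --- that every $f_T$ is topologically mixing and that the topological Pinsker factor of $f_T$ is nontrivial precisely when $T$ is ill-founded --- constitute the entire content of the hardness proof, and your text only gestures at ``grafting nested towers along sparse Toeplitz-like scales'' and then concedes that designing such a gadget ``is the heart of the matter.'' The tension you yourself identify (mixing pushes toward specification-type behaviour, hence toward CPE) is real, and nothing in the sketch shows it can be resolved, much less uniformly and Borel-in-$T$; the fallback ``mixing-ification preserving the topological Pinsker factor'' is likewise only a wish --- for instance, taking products with a fixed mixing CPE system does not work, since a product with a non-mixing factor system is not mixing and in any case changes the Pinsker factor data, and no concrete operation is exhibited. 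For comparison, the cited work obtains this result through local entropy theory: the transfinite hierarchy of CPE classes (iterated entropy-pair/Pinsker-type closures) is used as a coanalytic rank, and topologically mixing Cantor systems of arbitrarily large countable CPE class are constructed, so that non-Borelness follows from boundedness; note that this is consistent with the statement being only ``not Borel'' here, versus ``complete coanalytic'' for $\mathrm{CPE}^0(\Pi^n)$. Some construction of comparable substance is unavoidable, and your proposal does not supply it.

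Two smaller repairs to the upper-bound half. You should allow the witnessing closed invariant equivalence relation to be the diagonal $\Delta_X$: the system is a non-trivial factor of itself, so a zero-entropy system with no proper non-trivial factors (prime zero-entropy systems exist) would be misclassified by your clause excluding $\Delta_X$. And the Borelness of $(f,R)\mapsto h_{\mathrm{top}}(X/R,f_R)$ on the space $C(X,X)\times K(X\times X)$ deserves an actual argument --- entropy must be computed through covers or a metric on the quotient, uniformly in $R$ --- it is believable, but it is not mere bookkeeping.
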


\begin{question}[García-Ramos]
Let $k\geq 0$ and $n\geq 2$.
%(excluding $k=\infty$,$n=2$). 
Is $\mathrm{CPE}^k(\Pi^n)\cap\mathrm{Mix}$ Borel? 
\end{question}

When $k=\infty$ and $n=2$, the answer could be yes. Buzzi, Crovisier and Sarig \cite{BCS} proved that mixing maps on $\Pi^2$ have a unique measure of maximal entropy which is Bernoulli (and hence has the K-property). If the support was full it would imply that CPE$^{\infty}(\Pi^2)\cap \mathrm{Mix}$ are simply the mixing maps on $\Pi^2$, and hence the set would be Borel. Nonetheless, it is not clear to me if the support of the measure of maximal entropy in full. 

\begin{question}
[García-Ramos]
 Is the measure of maximal entropy fully supported for every smooth mixing map on the torus ($f\in C^{\infty}(\Pi^2,\Pi^2)$)?
\end{question}

\subsection{Kakutani classes}
\begin{question}
[Rodriguez Hertz] Are Kakutani equivalence classes Borel?   If so what is the complexity of the isomorphism relation on that class?
\end{question}
The Kakutani class of the odometer are the zero entropy loosely Bernoulli systems. This seems to be Borel using the $\overline{f}$ characterization of Feldman and Katok.

\section{Group actions}
\subsection{Free group}
\begin{question}
[Foreman]
     Let $X$ be the space of continuous actions of the free group $F_2$ on two generators. Is the collection of Bernoulli actions measure isomorphic to Bernoulli $F_2$-actions a Borel set? (Borel actions?)
\end{question}
 \subsection{Conjugacy for ergodic $\Z^2$ actions}
\textit{This subsection was written by Nishant Chandgotia. }

	How complicated is the conjugacy relation for $\Z^2$ actions as compared to $\Z$ actions?

Let me try to give this vague question some direction. We concentrate henceforth on the $\Z^2$ shift-action on the free part of the full shift $\{0,1\}^{\Z^2}$ which we will denote by $X_2$. The shift action will be denoted by $\sigma$. All that I am about to say generalises to free $\Z^d$ actions on Polish spaces but for simplicity we will stick to the stated setting. 

Think of $\Z^2$ both as a group and as a directed Cayley graph with standard generators. A directed bi-infinite Hamiltonian path in $\Z^2$ is a connected subgraph with the same set of vertices as $\Z^2$ such that each vertex has exactly one incoming and one outgoing edge. There is a natural $\Z^2$ action on these paths by translation. Let $X_H$ denote the space of all directed bi-infinite Hamiltonian paths.

I heard about the following result from Brandon Seward. 

\begin{theorem}\cite{gaotoappear} There is a Borel equivariant map $\Phi$ from $(X_2, \sigma)$ to $(X_H, \sigma)$.
	\end{theorem}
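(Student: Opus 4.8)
The plan is to produce $\Phi(x)$ as the limit of an inductive ``snake of snakes'' construction over a Borel hierarchical tiling of the free action $(X_2,\sigma)$. First I would invoke the classical Borel marker (``toast'') machinery for free Borel $\mathbb Z^d$-actions (Gao--Jackson and subsequent work): applied to $X_2$ it yields, in a Borel and $\sigma$-equivariant way, for each $x\in X_2$ a sequence of partitions $\mathcal T_1(x),\mathcal T_2(x),\dots$ of $\mathbb Z^2$ into finite connected ``rectangular'' tiles such that each tile of $\mathcal T_{k+1}(x)$ is an exact grid-like union of tiles of $\mathcal T_k(x)$, the minimal tile diameter at level $k$ tends to infinity, and every finite $F\subseteq\mathbb Z^2$ eventually lies well inside a single level-$k$ tile. (Promoting the standard non-nested, grouted toast to such an exactly nested grid-like tiling needs a short argument; see the obstacle below.)

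Then I would run the induction, maintaining the invariant: for every $x$, every $k$ and every tile $P\in\mathcal T_k(x)$, the construction has committed a set of oriented edges of the standard Cayley graph of $\mathbb Z^2$ forming a single directed simple path $\gamma_k(P)$ that visits every vertex of $P$ and whose two endpoints are boundary vertices of $P$; moreover the edges committed at a vertex $v$ can be altered at step $k+1$ only if $v$ lies within a fixed universal distance $C$ of the boundary of its level-$k$ tile. Base case ($k=1$): put the boustrophedon (``snake'') path in each level-$1$ tile. Inductive step: a level-$(k+1)$ tile $Q$ is a grid of level-$k$ tiles $P_1,\dots,P_m$, each carrying a snake $\gamma_k(P_i)$; pick, by a fixed Borel and translation-invariant rule (least witness for a fixed Borel order on finite labelled patterns over $\mathbb Z^2$), a traversal $P_{i_1},\dots,P_{i_m}$ of the tiles that is a Hamiltonian path of the tile-adjacency graph of $Q$ and starts and ends at boundary tiles; then form $\gamma_{k+1}(Q)$ by concatenating the snakes along this order, at each of the $m-1$ junctions re-routing the two snakes involved within distance $C$ of the shared face so that they become joined by a single edge, and routing the two surviving free endpoints to $\partial Q$. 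Joining endpoints of two \emph{distinct} current simple paths never creates a cycle, and exactly $m-1$ joins are made, so $\gamma_{k+1}(Q)$ is again a single directed path visiting all of $Q$ with endpoints on $\partial Q$.

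Finally I would pass to the limit. Since every ball $B(v,C+1)$ eventually lies well inside one tile, every vertex $v$ eventually becomes ``settled'', i.e.\ at distance $>C$ from the boundary of its current tile, and (as its tiles only grow) stays settled, so from then on the edges committed at $v$ never change; hence the committed edges converge to a subgraph $\Phi(x)$ of $\mathbb Z^2$ in which every vertex has in- and out-degree $1$. At each finite level the committed graph is the disjoint union of the finite paths $\{\gamma_k(P):P\in\mathcal T_k(x)\}$, hence acyclic; a finite cycle in $\Phi(x)$ would already be present and frozen at some finite level, a contradiction, so $\Phi(x)$ is a disjoint union of bi-infinite directed paths. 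That there is exactly one of them, i.e.\ that $\Phi(x)$ is connected, is forced by the invariant ``one snake per tile'' together with exhaustion: any two vertices eventually sit inside a common tile where the committed structure is a single path through both, and one tracks that the sub-path joining them survives into $\Phi(x)$. Thus $\Phi(x)\in X_H$. The map $\Phi$ is Borel because the toast and all selection rules are Borel, and $\sigma$-equivariant because the toast is $\sigma$-equivariant and every rule used is translation-invariant, so $\Phi(\sigma^v x)=\sigma^v\Phi(x)$.

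The main obstacle will be the global topology of the limit: one must guarantee that the committed structures glue across infinitely many levels into a \emph{single} bi-infinite path rather than a disjoint family of bi-infinite paths or rays, which is exactly what dictates the invariant (one boundary-to-boundary snake per tile) and the ``join only distinct paths, $m-1$ times, re-route only inside a bounded collar'' discipline. Two supporting points also need care, though each is routine: (i) promoting the standard grouted toast to an exactly nested, grid-like rectangular tiling that is still Borel and $\sigma$-equivariant (and whose tile-adjacency graphs at each level are traceable), and (ii) making the Hamiltonian traversal of the tiles of each $Q$ together with all the collar re-routings simultaneously well-defined, Borel, and translation-invariant, which a fixed Borel linear order on finite patterns over $\mathbb Z^2$ handles.
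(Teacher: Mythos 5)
The paper states this theorem without proof, citing \cite{gaotoappear}, so there is no in-paper argument to compare against; judged on its own, your strategy (nested Borel marker regions, one boundary-to-boundary snake per tile, level-by-level merging, edges frozen once a vertex is deep inside its tile) has the right general shape and is close in spirit to how such constructions go. However, two steps you treat as routine are genuine gaps. First, the tiling structure you posit is too strong: an exactly nested, grid-like sequence of full tilings cannot be obtained Borel-equivariantly on $X_2$. If each level-$(k+1)$ tile were an exact grid of congruent rectangular level-$k$ tiles, then by induction all tiles at all levels would be exact rectangles in exact alignment, and $x\mapsto(\text{position of }0\text{ modulo the level-}k\text{ grid})_{k}$ would be a Borel equivariant map from $X_2$ to a $\Z^2$-odometer; pushing forward the Bernoulli measure would exhibit a pure point spectrum system as a factor of a weakly mixing one, a contradiction. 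So the construction must run on the irregular ``almost rectangular'' nested regions furnished by the Gao--Jackson style hyperfiniteness/orthogonal-marker machinery, and then neither the traceability (Hamiltonian traversability, between suitable boundary subtiles) of the subtile adjacency graph inside a big tile, nor the feasibility of all junction surgeries within a uniformly bounded collar of irregular shared boundaries, is automatic: controlling the boundary geometry so that exactly these steps can be carried out is the technical heart of the matter, not the ``short argument'' you defer.

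Second, the passage to the limit is incomplete precisely at the point you flag as the main obstacle. Your freezing rule does give a well-defined limit graph with in- and out-degree one, and acyclicity of each finite stage does rule out finite cycles, so the limit is a disjoint union of bi-infinite directed paths. But the connectivity claim --- that two vertices $u,v$ deep inside a common tile remain joined because ``the sub-path joining them survives'' --- is unjustified: the segment of the level-$k$ snake between $u$ and $v$ may pass within distance $C$ of the level-$m$ tile boundary for every $m\ge k$, hence be re-routed at every subsequent stage, so no particular $u$--$v$ connection need ever stabilize even though $u$ and $v$ lie on a single snake at each finite stage; in the limit they could perfectly well land on different bi-infinite paths. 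To close this you need a stronger inductive invariant, for instance that junction surgery only splices detours into the existing segment between already-frozen vertices and never severs it, or some quantitative control on where re-routings may occur relative to previously committed segments, and arranging for such an invariant interacts with the boundary-geometry issues above. As written, the single-path (Hamiltonicity) property of the limit is asserted rather than proved.
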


One of the consequences of this result is that it gives a (very special) orbit equivalence from $(X_2, \sigma)$ to a $\Z$ Polish dynamical system which we denote hence forth by $(X_2, T_\Phi)$. In addition one has that if $\mu$ is an invariant ergodic probability measure for $(X_2, \sigma)$ then it is also an invariant ergodic probability measure for $(X_2, T_\Phi)$. Let $\mathcal M_e$ denote the space of invariant probability measures for $(X_2,\sigma)$. Here is a rather (wild) question.

\begin{question}
[Chandgotia]
Can $\Phi$ be chosen such that for all $\mu, \nu\in M_e$, $(X_2, \mu, \sigma)$ is conjugate to $(X_2, \nu, \sigma)$ if and only if $(X_2, \mu, T_\Phi)$ is conjugate to $(X_2, \nu, T_\Phi)$?
\end{question}

\section{Polish groups}

\subsection{Orbit equivalence relations}
\textit{This subsection was written by Marcin Sabok.}

Given a  standard probability space $(X,\mu)$, the group Aut$(X,\mu)$ acts on itself by conjugation and induces an orbit equivalence relation. Foreman, Rudolph and Weiss \cite{foremanconjugacyproblemergodic2011} showed that  the set of ergodic elements in Aut$(X,\mu)$ that are isomorphic to their inverse is a complete analytic set, which implies that the orbit equivalence relation is a complete analytic subset of Aut$(X,\mu)\times$Aut$(X,\mu)$. On the other hand, in \cite{FW} Foreman and Weiss showed that the action of the group Aut$(X,\mu)$ on the ergodic elements of Aut$(X,\mu)$ by conjugation is turbulent in the sense of Hjorth.

In general, given a Borel action of a Polish group on a standard Borel space we consider the induced orbit equivalence relation. We say that an orbit equivalence relation E is complete if every orbit equivalence relation is Borel reducible to E.

\begin{conjecture} [Sabok] The action of $\mathrm{Aut}(X,\mu)$ on itself by conjugation is a complete orbit equivalence relation.
\end{conjecture}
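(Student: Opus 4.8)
The plan is to prove completeness by exhibiting, for a fixed universal orbit equivalence relation $E_u$, a Borel reduction from $E_u$ to the conjugacy relation $\cong$ on $\mathrm{Aut}(X,\mu)$; the reverse reduction is automatic, since $\cong$ is itself the orbit equivalence relation of the (continuous) conjugation action of the Polish group $\mathrm{Aut}(X,\mu)$ on itself. For $E_u$ I would take the isometry relation on Polish metric spaces, shown to be a universal orbit equivalence relation by Gao and Kechris (alternatively: isomorphism of separable Banach spaces, shown universal by Ferenczi, Louveau and Rosendal, or the translation action of a universal Polish group such as $\mathrm{Iso}(\mathbb{U})$). The task then becomes: construct, Borel-uniformly in a Polish metric space $M$ (coded, say, by a dense sequence $(x_i)$ together with the array $(d(x_i,x_j))_{i,j}$), a measure-preserving transformation $T_M$ of $(X,\mu)$ such that $M$ is isometric to $M'$ if and only if $T_M$ is measurably conjugate to $T_{M'}$.

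The transformation $T_M$ should be built by an explicit coding construction --- a cutting-and-stacking / approximately-conjugate (AbC) procedure of the kind developed by Foreman, Rudolph and Weiss \cite{foremanconjugacyproblemergodic2011} and Foreman and Weiss \cite{FW} --- in which the distance array of $M$ dictates the combinatorial parameters at each stage: block lengths, spacer patterns, and the way the towers of stage $n$ are shuffled to build stage $n+1$. The construction must be engineered to be simultaneously \emph{flexible} and \emph{rigid}. Flexibility: any bijective isometry $M \to M'$ should be realisable stage by stage as a sequence of finite tower permutations converging in $\mathrm{Aut}(X,\mu)$ to a conjugacy $T_M \to T_{M'}$; this should follow from continuity of the construction in its parameters and yields the forward implication. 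Rigidity: the combinatorial skeleton encoding $M$ should survive measurable conjugacy and be recoverable from $T_M$ as an isomorphism invariant --- for instance through its factor lattice, its self-joinings, the structure of its centralizer, or a distinguished family of eigenvalue-type data --- so that $T_M \cong T_{M'}$ forces an isometry $M \to M'$.

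I expect the rigidity half to be the main obstacle. The non-Borelness theorems of Foreman--Rudolph--Weiss show that coding data of this type is ``hard to forget'', but there the conclusion is merely that certain sets are complete analytic, obtained by reducing from a complete analytic \emph{set}; upgrading this to a genuine reduction of equivalence relations requires a reconstruction theorem --- conjugacy must recover the \emph{entire} structure of $M$, not just decide one analytic statement about it --- and this reconstruction must coexist with the flexibility demanded by the forward direction, a tension that is precisely what makes AbC classification results delicate. A secondary difficulty is that a complete orbit equivalence relation must realise the complexity of actions of non-$S_\infty$ Polish groups: turbulence of the conjugation action on the ergodic part (Foreman--Weiss) already shows $\cong$ is not Borel reducible to an $S_\infty$-action, which is necessary but far from sufficient, and it suggests that $T_M$ should also carry a substantial compact-group-extension or Gaussian component whose representation-theoretic invariants are governed by $M$, layered over a rigid, rank-one-like skeleton. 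Finally one verifies that $M \mapsto T_M$ is Borel (routine if the stagewise construction is presented effectively) and that the correspondence is a genuine bireduction; note in passing that $\cong$ being complete analytic as a set (Foreman--Rudolph--Weiss) is entirely consistent with --- and in fact forced by --- the conjecture.
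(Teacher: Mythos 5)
There is a fundamental mismatch here: the statement you were asked about is an open \emph{conjecture} (due to Sabok), not a theorem, and the paper contains no proof of it --- nor does your proposal supply one. What you have written is a research programme, and its decisive step is precisely the open problem. Choosing a universal orbit equivalence relation such as isometry of Polish metric spaces (Gao--Kechris) is fine, and the ``reverse'' direction is indeed trivial since conjugacy is itself an orbit equivalence relation of a Polish group action. But the heart of the matter --- constructing $M\mapsto T_M$ so that a conjugacy of $T_M$ with $T_{M'}$ can be \emph{reconstructed} into an isometry of $M$ with $M'$, while the construction stays flexible enough to realise every isometry as a conjugacy --- is asserted, not proved, and you yourself flag it as the main obstacle. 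None of the results you invoke delivers it: the Foreman--Rudolph--Weiss and Foreman--Weiss anti-classification theorems reduce a complete analytic \emph{set} (or rule out classification by countable structures via turbulence), which is strictly weaker than a Borel reduction of a universal orbit equivalence relation; turbulence is, as you say, necessary but nowhere near sufficient. So the proposal has a genuine gap exactly where a proof would have to live, and it should be presented as a strategy, not a proof.

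It is also worth noting that the paper's own discussion points to a different route and to a possible obstruction, neither of which appears in your sketch. The suggested avenue is to exploit Sabok's theorem that affine homeomorphism of Choquet simplices is a complete orbit equivalence relation, and to try to realise Choquet simplices in a Borel way as simplices of invariant measures of suitable dynamical systems (Downarowicz does this with Toeplitz subshifts, but not Borel-reducibly, and subshifts cannot work since their topological conjugacy is a countable Borel equivalence relation); one would then hope that measurable conjugacy of richer systems reduces the affine homeomorphism relation. In the opposite direction, the paper records that a positive answer to the question of Borel reducing conjugacy on $\mathrm{Aut}(X,\mu)$ to conjugacy on a full group $[T_0]$ would \emph{refute} the conjecture, by the Allison--Panagiotopoulos obstruction for groups with bi-invariant metrics; so the truth value of the statement you set out to prove is itself genuinely in doubt, and any purported proof must in particular explain why such a reduction to $[T_0]$ cannot exist.
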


In \cite{Sab} it is shown that the affine homeomorphism of Choquet simplices is a complete orbit equivalence relation. Recall that by the result of Downarowicz \cite{Dow} every Choquet simplex is realized as the simplex of invariant measures on a (Toeplitz) subshift, however this construction is not a Borel reduction of affine homeomorphism of Choquet simplices to the conjugacy of subshifts. In fact such a reduction cannot exist since the topological conjugacy of subshifts is a countable Borel equivalence relation, but it seems plausible that changing subshifts to more complicated systems may lead to a reduction of the affine homeomorphism of Choquet simplices to the conjugacy of those systems in the sense of measurable dynamics.

The above conjecture is connected to the question on which groups can induce complete orbit equivalence relations. Recall that a Polish group $G$ is universal if every Polish group can be embedded into $G$. It is not known whether a group that is not a universal Polish group can induce a complete orbit equivalence relation. Note that the group $U(H)$ of unitary operators on the separable Hilbert space $H$ is not a universal Polish group since there are examples of Polish group (e.g., the so-called exotic group) that do not admit any nontrivial representations on a Hilbert space. However, Aut$(X,\mu)$ embeds into $U(H)$ via the Koopman representation. Thus, the above conjecture implies a positive answer to \cite[Question 9.3]{Sab} on whether the group $U(H)$ can induce a complete orbit equivalence relation

\subsection{Full groups}
\textit{The following two subsections were written by François Le Maitre.}

Dye's theorem states that up to conjugacy, any two ergodic measure-preserving transformations on a standard probability space $(X,\mu)$ share the same orbits. 
This can be restated as follows. Let us first fix an ergodic transformation $T_0$, and define its full group $[T_0]$ as the group of bimeasurable bijections $U:X\to X$ such that for all $x$, the point $U(x)$ belongs to the $T_0$-orbit of $x$. Then every ergodic measure-preserving transformation is conjugate to an element of $[T_0]$ whose orbits are equal to those of $T_0$.
This motivates the following question of Thouvenot, which in a nutshell asks whether Ornstein's beautiful machinery can work inside $[T_0]$.

\begin{question}[Thouvenot]
	Let $T_1,T_2$ be two Bernoulli shifts with equal entropy, suppose that they have the same orbits as $T_0$. Is there $T\in[T_0]$ such that $T_1=TT_2T^{-1}$?
\end{question}

More generally, one can wonder what is the relationship between the conjugacy relation on the group Aut$(X,\mu)$ of all measure-preserving transformations and the conjugacy relation on $[T_0]$. 

\begin{question}\label{qu: red Aut to full}
	Does the conjugacy relation on $\mathrm{Aut}(X,\mu)$ Borel reduce to the conjugacy relation on $[T_0]$? Does it when we restrict to ergodic transformations?
\end{question}

This question is particularly relevant towards answering in the negative Sabok's conjecture of the universality of the conjugacy relation on $\mathrm{Aut}(X,\mu)$. Indeed, if its answer is positive then since $[T_0]$ admits a bi-invariant compatible metric (given by $d_u(S,T)=\mu(\{x\in X\colon S(x)\neq T(x)\})$, see e.g.~\cite[Prop.~3.2]{kechrisGlobalaspectsergodic2010}), the equivalence relations it induces cannot be universal for Polish group actions by work of Allison and Panagiotopoulos \cite{allisonDynamicalObstructionsClassification2021}. Actually, their work also provides a dynamical analogue of turbulence called \emph{unbalancedness} which rules out Borel reduction to equivalence relations coming from actions of Polish groups admitting a compatible biinvariant metric, so a positive answer to the following question implies a negative answer to Question \ref{qu: red Aut to full}.

\begin{question}
	Is the conjugacy action of $\mathrm{Aut}(X,\mu)$ on itself unbalanced? If yes, is it still if we restrict to ergodic transformations?
\end{question}

A positive answer to the following  question would also yield a negative answer to Question \ref{qu: red Aut to full} thanks to \cite{foremanconjugacyproblemergodic2011}.

\begin{question}
	Is the conjugacy relation on $[T_0]$ Borel? If no, is it when restricted to ergodic transformations?
\end{question}
For other properties of the conjugacy relation on $[T_0]$, especially turbulence, see \cite[Chap.~5]{kechrisGlobalaspectsergodic2010}.

\subsection{Topological full groups}
Topological full group of minimal countable discrete group actions on Cantor spaces and their subgroups have proven to be an invaluable source of examples of countable groups \cite{Matui,JM,nekrashevychSimplegroupsdynamical2017,nekrashevychPalindromicsubshiftssimple2018}.
They are defined as follows: given a countable group $\Gamma$ and a continuous action $\Gamma \curvearrowright X$ on the Cantor space $X=\{0,1\}^\mathbb{N}$, the \emph{topological full group} of the action is the group of all homeomorphisms $h$ of the Cantor space such that there is a clopen partition $(U_1,\dots,U_n)$ of $\{0,1\}^\mathbb{N}$ and $\gamma_1,\dots,\gamma_n\in\Gamma$ such that 
$$ \text{for all }x\in U_i, h(x)=\gamma_i\cdot x.$$
Equivalently, $h$ is in the topological full group of the action if and only if there is a map $c_h: X\to \Gamma$ such that $c_h$ is \emph{continuous} and $h(x)=c_h(x)\cdot x$ for all $x\in X$. This map $c_h$ is called the \textbf{cocycle} of $h$, the equation $h(x)=c_h(x)\cdot x$ uniquely defines it when the $\Gamma$-action was free. It is important to note here that the continuity of $c_h$ is a fundamental hypothesis to make: without it, we obtain an uncountable group which can even fail to have a Polish group topology or to be Borel \cite{ibarluciaFullGroupsMinimal2016}. 

It has been noted that as soon as we replace $X$ by a connected space, then the topological full group is equal to the acting group since the acting group is countable discrete. But what if the acting group is not discrete, e.g. the real line? It turns out this question has been studied by 
Matte-Bon and Triestino for suspension flows over Cantor minimal systems; they showed that one obtains a Polish group\footnote{ As they explain in their introduction, this Polish group was first defined and studied as the group of homeomorphisms of the suspension space which are isotopic to the identity, but turns out to be equal to the group of homeomorphisms which preserve the orbits of the flow and have a continuous cocycle.} which moreover arises as a natural completion of analogues of Thompson's group $T$ which they build \cite[Appendix A]{mattebonGroupsPiecewiseLinear2020}.

More generally, one can use the approach from \cite{carderiMorePolishfull2016} to show that given a Polish group $G$ acting continuously on a compact Polish space $X$, one can associate to it a Polish group, the topological full group of the action. When the action is free, this is the group of homeomorphisms which preserve the $G$-orbits and whose cocycle is continuous. In the non-free case, this is the group of homeomorphisms which preserve the $G$ orbits and which \emph{admit} a continuous cocycle. We briefly sketch the construction, assuming some familiarity with \cite[Sec.~3.2]{carderiMorePolishfull2016}.

\begin{proof}[Sketch of the construction]
	The group $\mathcal C(X,G)$ of continuous maps from $X$ to $G$ is a Polish group for the compact-open topology. The set $\mathcal C(X,X)$ of continuous maps from $X$ to $X$ is Polish also for the compact-open topology. $ \mathrm{Homeo}(X)$ is a $G_\delta$ in there because it is Polish for the compact-open topology. It acts continuously on $\mathcal C(X,G)$ by precomposition.
	
	We have a natural map $\Phi:\mathcal C(X,G)\to \mathcal C(X,X)$ given by $f\mapsto (x\mapsto f(x)\cdot x)$. Such a map is continuous, so the preimage of $ \mathrm{Homeo}(X)$ is $G_\delta$ in $\mathcal C(X,G)$. Let us call 
	$$\widetilde{[G]}_c:=\Phi^{-1}( \mathrm{Homeo}(X))$$
	the \textbf{pre-full group} of the action (it is just the space of possible continuous cocycles of the homeomorphisms which preserve the $G$-orbits and admit such a continuous cocycle). Being a $G_\delta$ subset of a Polish space, it is Polish.

	Now  consider the group law on $\widetilde{[G]}_c$ which comes from composition of the corresponding homeomorphisms, namely 
	$$f\cdot g(x)=f[\Phi(g)(x)]g(x)=f(g(x)\cdot x)g(x).$$
	Since $\Phi$ is continuous and $ \mathrm{Homeo}(X)$ acts continuously on $\mathcal C(X,G)$, this multiplication is continuous and we conclude that the pre-full group is a Polish group. Now the \textbf{topological full group} of the action is the quotient of the pre-full group by the closed normal full group consisting of those elements $f$ such that $\Phi(f)=\mathrm{id}_X$. It is thus a Polish group, whose topology clearly refines the compact-open topology on $ \mathrm{Homeo}(X)$.
\end{proof}

\begin{question}
	What can we say about these topological full groups, especially when the acting group is the real line? For instance, when are they coarsely bounded in the sense of \cite{rosendalTopologicalVersionBergman2009}? When do they admit a dense finitely generated group\footnote{See \cite[Thm. A]{mattebonGroupsPiecewiseLinear2020} for a partial answer for suspension flows over the Cantor space.}?  How many such groups are there\footnote{See \cite[Sec.~10]{mattebonGroupsPiecewiseLinear2020} for a complete answer for suspension flows over the Cantor space.}? Can one classify their actions by homeomorphisms?
\end{question}

Note that when we have an action with more structure, e.g. the flow associated to a vector field, it would also be interesting to study full groups with more stringent regularity conditions, e.g.~where the elements are say $\mathcal C^1$-diffeomorphisms with $\mathcal C^1$ cocycles.

\section{Realization and universality}

\subsection{Smooth realization}
\textit{This subsection was extracted from questions of Federico Rodriguez Hertz on the open problem sessions of the previously mentioned conference}

Let $(I,\lambda)$ be the interval Lebesgue space and $(\Pi^n,\lambda_n)$ the $n$-torus Lebesgue space. 
We have that $\text{Aut}_{h<\infty}(I,\lambda)$ is the space of (Lebesgue) measure-preserving automorphisms of $I$ with finite entropy and $\text{Diff}(\Pi^n,\lambda_n)$ the space of (Lebesgue) measure-preserving diffeomorphisms of $\Pi^n$. 
\begin{question}
[Rodriguez Hertz] Is there a Borel map $G:\text{Aut}_{h<\infty}(I,\lambda)\rightarrow \bigcup_n \text{Diff}(\Pi^n,\lambda_n)$, such that $(I,\lambda,T)$ is isomorphic to $(\Pi^n,\lambda_n, G(T))$. 
\end{question}

There seems to be no indication that this result is true, so probably the classes (of automorphisms) need to be restricted, for example we could consider only the K-automorphisms. 

It is still open if any (non-periodic) odometer can be smoothly realized. 

\begin{question}
[Rodriguez Hertz] Is there a Borel map $G:\text{Aut}_{h<\infty}(I,\lambda)\rightarrow \bigcup_n \text{Diff}(\Pi^n,\lambda_n)$, such that $(I,\lambda,T)$ is Kakutani equivalent to $(\Pi^n,\lambda_n, G(T))$. 
\end{question}

Again this seems to be a complicated question, the only advantage is that on this situation odometers can be realized with rotations on the torus.  
\begin{question}
[Rodriguez Hertz] If a measure-preserving system is Kakutani realizable can we conclude it is also isomorphically realizable? 
\end{question}

\subsection{Uniquely ergodic actions}

Krieger's generator theorem allows you to realize finite entropy measure-preserving systems with subshifts.
A finer construction (the Jewett-Krieger theorem) allows you to realize measure-preserving transformations with uniquely ergodic topological dynamical systems. 

Krieger's generator type theorem is known for general group actions. 

\begin{question}
[Seward] Do we have uniquely ergodic topological models for every measure-preserving group action of a free group?
\end{question}
\subsection{Universality}
Universality is similar to realization but here we have more flexibility in taking the measure on the image of $G$, that is the measure which is being used to be realized is taken to be fully supported but not necessarily the Lebesgue measure. In this sense it is known that smooth diffeomorphisms of the 2-torus (with SL$(2,\Z)$ matrices) are universal for the finite entropy measure preserving transformations. This argument is a refinement of Krieger's generator theorem. Another positive result was obtained by Soo-Quas. 

\begin{question}
    [Rodriguez Hertz] Which families of systems are universal? 
\end{question}

The next subsection is also related to universality. 

\subsection{Embeddings into systems with non-uniform specification}
\textit{This subsection is written by Nishant Chandgotia. }
\begin{question}\label{question: Main}\cite{chandgotia2022borel}
	Let $(\mathbb T^d, R)$ be a toral automorphism which is ergodic for the Lebesgue measure. Let $(Y, S)$ be a homeomorphism of a Polish space without any invariant probability measure. Is there an equivariant Borel embedding of $(Y, S)$ into $(\mathbb T^d, R)$?
\end{question}

Hochman \cite{MR3880210} proved that such an embedding is possible where the toral automorphism $(\mathbb T^d, R)$ is replaced by the full shift $(\{0,1\}^{\Z}, \sigma )$ (or more generally a mixing shift of finite type). Using this, I believe it is immediate that the answer to Question \ref{question: Main} is yes when $R$ is hyperbolic. The challenge comes from the lack of symbolic coding in the non-hyperbolic case \cite{lindenstrauss2004invariant, lindenstrauss2005symbolic}. 

Moving out of the compressible setting, in \cite{MR3453367} Quas and Soo prove that for any ``appropriate'' Polish actions $(Y, S)$, there exists an equivariant Borel embedding of $(Y,S)$ into $(\mathbb T^d, R)$ modulo a $\mu$-null set where $\mu$ is an invariant probability measure. To prove this they critically used the fact that ergodic toral automorphisms satisfy a nice mixing property called non-uniform specification.

  Let $X$ be a compact metric space. We say that a $\Z$-action, $(X,T)$ satisfies \emph{non-uniform specification} if 
 there exists a sequence of increasing functions $g_n:(0,1) \to (0,\infty)$ satisfying the following conditions:
 \begin{itemize}
 	\item For every $\epsilon>0$, $\lim_{n \to \infty}g_n(\epsilon)= 0$,
 	\item For  every $n_1,\ldots,n_s \in \mathbb{N}$ , $i_1,\ldots,i_s \in \mathbb{Z}$ and  $\epsilon>0$ such that 
 	$$\{i_1 + (1+g_{n_1}(\epsilon))[1, n_1],\ldots, i_s + (1+g_{n_s}(\epsilon))[1, n_s]\}$$
 	are pairwise disjoint, 
 	and any $x_1,\ldots,x_s \in X$ there exists $x \in X$ such that $d_X(T^{i_j+t}(x), T^{i_j+t}(x_j)) < \epsilon$ for all $1\leq t\leq n_j$ and $1 \le j \le s$.
 \end{itemize}
 
 Quas and Soo asked whether their result can be extended to topological dynamical systems with non-uniform specification. This was proved affirmatively by Burguet \cite{burguet2020topological}. Burguet proved more than what Quas and Soo were asking for: He showed that under suitable assumptions on the entropy any free Polish $\Z$-action $(Y, S)$ can be embedded into a system with non-uniform specification modulo a universally null set (that is a set with measure zero for all invariant probability measures). Naturally the question arises whether embedding can be extended to the full space.
 
 We remark that we do not even know the answer to the following question. 
 
 \begin{question}\label{question: Not_Main}\cite{chandgotia2022borel}
 	Let $(\mathbb T^d, R)$ be a toral automorphism which is ergodic for the Lebesgue measure. Let $(Y, S)$ be a homeomorphism of a Polish space without any invariant probability measure. Is there an equivariant Borel map of $(Y, S)$ into the free part of $(\mathbb T^d, R)$?
 \end{question}
 
Along with Tom Meyerovitch we gave an alternative proof of David Burguet's result in \cite{https://doi.org/10.1112/plms.12398} introducing a specification like property called flexibility. We used an approximation method to construct our embeddings, where at each stage of our construction we modify the approximate embedding on a small part of the space, and finally use Borel-Cantelli to ensure that the part of the space where our approximation doesn't converge has measure zero for any invariant probability measure. Such methods can't be used for constructing embeddings of Polish actions because there is no alternative to Borel-Cantelli lemma in this setting. 

\section{Ackowledgements}

We thank the BIRS administration for providing the necessary support during the workshop "Interactions between Descriptive Set Theory and Smooth Dynamics".

%\bibliographystyle{alpha}
%\bibliography{refs}
\end{document}